\documentclass{amsart}
\usepackage {amsmath, amscd}
\usepackage {amssymb}
\usepackage {epsfig}
\usepackage {bm}
\usepackage {indentfirst} %indent the first par after section
\usepackage{color}

\usepackage{hyperref}

\usepackage[active]{srcltx}

%\doublespacing
%\oddsidemargin0cm
% \topmargin-1cm     %I recommend adding these three lines to increase the
% \textwidth15.5cm   %amount of usable space on the page (and save trees)
% \textheight22.5cm
% \advance\hoffset by -1.5cm

\numberwithin{equation}{section}

%misc
\def\<{\langle}
\def\>{\rangle}

\def\HH{{\mathcal H}}

\def\LL{{\mathcal L}}

\def\TT{{\mathcal T}}

\def\bbC{\mathbb{C}}

\def\bbN{\mathbb{N}}

\def\bbD{\mathbb{D}}
\def\bbT{\mathbb{T}}

\def\1{\mathbf{1}}

\newtheorem{lemma}{Lemma}[section]
%[section]
\newtheorem{theorem}[lemma]{Theorem}%[section]

\newtheorem{conjecture}[lemma]{Conjecture}

\theoremstyle{definition}

%[section]

\title{Truncated Toeplitz operators and complex symmetries}
\author{Hari Bercovici and Dan Timotin}
\address{Mathematics Department, Indiana University, Bloomington, IN 47405, USA}
\email{bercovic@indiana.edu}
\address{Simion Stoilow Institute of Mathematics, Romanian Academy, Calea Grivi\c tei 21, Bucharest, Romania}
\email{dan.timotin@imar.ro}
\thanks{HB was supported in part by a grant of the National Science Foundation.}

\begin{document}
	
\begin{abstract}
	We show that truncated Toeplitz operators are characterized by a collection of complex symmetries.  This was conjectured by Kli\'s-Garlicka, \L anucha, and Ptak, and proved by them in some special cases. 
\end{abstract}

\maketitle
	
\section{Introduction}
	
The systematic study of truncated Toeplitz operators was initiated by Sarason~\cite{Sa}. Given an inner function $u$ on the unit disc, this class, denoted by  $\mathcal{T}_u$ consists of those bounded operators on $K_u=H^2\ominus uH^2$ that are compressions of multiplication operators operators. A recent survey of results in this area is contained in~\cite{GR}. 

Sarason observed that, while every operator in $\mathcal{T}_u$ is complex symmetric (relative to the natural conjugation on $K_u$; see \cite{GPu}), not every complex symmetric operator on $K_u$ belongs to $\mathcal{T}_u$. Operators in $\mathcal{T}_u$ satisfy additional complex symmetry conditions and the authors of \cite{KLP} conjectured that every operator on $K_u$ that satisfies these additional symmetries necessarily belongs to $\mathcal{T}_u$.  This conjecture is proved in \cite{KLP} in many cases in which $u$ is a Blaschke product.  The purpose of this note is to provide a proof of this conjecture for arbitrary inner functions $u$. In the case in which $u$ has at least one zero, it turns out that the operators in $\mathcal{T}_u$ are characterized by the fact that they satisfy just two complex symmetries.  In case $u$ is singular, one needs to require a countable collection of complex symmetries.

\section{Notation and preliminaries}

We denote by $\bbC$   the complex plane, by $\bbD=\{z\in\bbC:|z|<1\}$ the unit disc, and by $\bbT=\{z\in\bbC:|z|=1\}$ the unit circle.
As usual, we  view the Hardy space $H^2$ on $\bbD$ as a subspace of $L^2=L^2(\bbT)$ (relative to the normalized arclength measure on $\mathbb T$)  by identifying functions analytic in $\bbD$ with their radial limits (which exist almost everywhere). Similarly,   the algebra $H^\infty$ of bounded analytic functions in $\bbD$ can be viewed as a closed subalgebra of $L^\infty=L^\infty(\bbT)$. We denote by $S$ the  shift operator in $H^2$, defined by $(Sf)(z)=zf(z)$, $f\in H^2$, $z\in\mathbb D$.

A function $u\in H^\infty$ is said to be \emph{inner} if $|u|=1$ almost everywhere on $\bbT$. For instance, the function $\chi\in H^\infty$ defined by $\chi(z)=z$, $z\in\mathbb D$, is inner.
  If $u$ is an inner function, the model space ${K_u}$ (often denoted $\mathcal {H}(u)$ in the literature) is defined by ${K_u}=H^2\ominus uH^2$ and $P_{{K_u}}:L^2\to K_u$ denotes the orthogonal projection onto ${K_u}$.

 Given an arbitrary bounded operator
  $A$ on a Hilbert space $\HH$, we  denote by $Q_A$ the quadratic form on $\HH$ defined by $Q_A(f)=\<Af, f\>$, $f\in\mathcal H$. A {\em conjugation} on a Hilbert space $\HH$ is an isometric, conjugate linear involution, that is, $C\circ C=I_{\mathcal H}$ and $\langle Ch,Ck\rangle=\langle k,h\rangle$ for $h,k\in\mathcal H$. An operator $A$ is then said to be {\em $C$-symmetric}~\cite{GPu} or simply \emph{complex symmetric} when $C$ is understood, if $A^*=CAC$. This condition is easily seen to be equivalent to $Q_A(f)=Q_A(Cf)$, $f\in\HH$.

Given an arbitrary inner function $u$, there is a conjugation $C_u$ on $L^2$ defined by $ C_uf= u\overline{\chi f}$.  This conjugation maps $K_u$ bijectively onto itself and therefore it also defines a conjugation on this space. We record for further use the following result whose proof is a simple calculation.

\begin{lemma}\label{le:basic}
	Suppose that $u$ and $v$ are inner functions in $H^\infty$ and $v$ divides $u$. Then for every $f\in L^2$ we have
	\[
	C_u(C_{u/v} (f))= vf.
	\]
\end{lemma}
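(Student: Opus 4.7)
The plan is to carry out the simple direct calculation alluded to in the statement, working with the almost-everywhere defined boundary functions on $\bbT$. The two key facts that will make everything collapse are that $|\chi|=1$ and $|u|=|v|=1$ almost everywhere on $\bbT$, so conjugation of these unimodular factors is the same as inversion.

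First I would unfold the inner conjugation: by the definition $C_w f = w\,\overline{\chi f}$ applied with $w=u/v$ (which is inner since $v$ divides $u$), we get
\[
C_{u/v}(f) = \frac{u}{v}\,\overline{\chi f}.
\]
Then I would substitute this into the outer conjugation $C_u$, obtaining
\[
C_u(C_{u/v}(f)) = u\,\overline{\chi \cdot \tfrac{u}{v}\,\overline{\chi f}}
= u\,\overline{\chi}\,\overline{\bigl(u/v\bigr)}\,\chi f .
\]
At this point everything should simplify on $\bbT$: $\chi\overline{\chi}=|\chi|^2=1$, and $u\,\overline{u/v} = u\bar u / \bar v = 1/\bar v = v$ since $u$ and $v$ are unimodular on $\bbT$. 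The product therefore collapses to $vf$, which is the desired identity.

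The only mild subtlety is that the calculation takes place pointwise a.e.\ on $\bbT$ rather than on the disc, so one should be explicit that $C_u$ is first defined on $L^2(\bbT)$ and that the identities $|\chi|=|u|=|v|=1$ are understood a.e.\ on $\bbT$. Once that is clear, there is no real obstacle; the lemma reduces to the two unimodularity cancellations above.
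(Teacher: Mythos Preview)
Your calculation is correct and is exactly the ``simple calculation'' the paper alludes to; the paper does not spell out the proof at all, so your direct unfolding of the two conjugations together with the unimodularity of $\chi$, $u$, and $v$ on $\bbT$ is precisely what is intended.
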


The space ${K_u}$ is a reproducing kernel space of analytic functions on $\bbD$.
The following well-known lemma is the $H^2$ version of a results that holds in arbitrary reproducing kernel Hilbert spaces.

\begin{lemma}\label{le:weak convergence general} Suppose that $\{f_n\}_{n\in\mathbb N}\subset H^2$.  Then:
 \begin{itemize}
   \item[(i)] The sequence $\{f_n\}_{n\in\mathbb N}$ converges weakly to a function $f\in H^2$ if and only if $\sup_{n\in\mathbb N}\|f_n\|<+\infty$ and $\lim_{n\to\infty}f_n(z)= f(z)$ for every $z\in\bbD$.
  \item[(ii)] The sequence $\{f_n\}_{n\in\mathbb N}$ converges in norm to a function $f\in H^2$ if and only if $\lim_{n\to\infty}\|f_n\|=\|f\|$ and $\lim_{n\to\infty}f_n(z)= f(z)$ for every $z\in\bbD$.
 \end{itemize}
 \end{lemma}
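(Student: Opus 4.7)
\textbf{Proof proposal for Lemma \ref{le:weak convergence general}.}

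My plan is to exploit the reproducing kernel structure of $H^2$, namely the fact that for each $z\in\bbD$ the kernel $k_z(w)=(1-\bar z w)^{-1}$ lies in $H^2$ and satisfies $f(z)=\<f,k_z\>$ for every $f\in H^2$, and that the linear span of $\{k_z:z\in\bbD\}$ is dense in $H^2$. Throughout I will move freely between the two parts since part (ii) rests on part (i) together with a standard Hilbert space identity.

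For the forward implication in (i), weak convergence of $\{f_n\}$ forces $\sup_n\|f_n\|<\infty$ by the uniform boundedness principle, and testing against $k_z$ yields $f_n(z)=\<f_n,k_z\>\to\<f,k_z\>=f(z)$ for each $z\in\bbD$. For the converse, assume $M:=\sup_n\|f_n\|<\infty$ and $f_n\to f$ pointwise on $\bbD$. First one checks that $f\in H^2$ with $\|f\|\le M$: any weak cluster point $g$ of $\{f_n\}$ in $H^2$ (which exists because the closed ball of radius $M$ is weakly compact) must satisfy $g(z)=f(z)$ for all $z\in\bbD$ by the argument just used, so $f=g\in H^2$. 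Then for any $z\in\bbD$ one has $\<f_n,k_z\>\to\<f,k_z\>$, so the same holds for every finite linear combination of kernels; a standard $3\varepsilon$ argument using density of such combinations and the uniform bound $\|f_n\|\le M$ promotes this to $\<f_n,g\>\to\<f,g\>$ for every $g\in H^2$.

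For (ii), if $f_n\to f$ in norm then trivially $\|f_n\|\to\|f\|$, and norm convergence implies weak convergence, which by (i) gives pointwise convergence on $\bbD$. Conversely, if $\|f_n\|\to\|f\|$ and $f_n\to f$ pointwise on $\bbD$, then $\{\|f_n\|\}$ is bounded, so (i) applies and $f_n\to f$ weakly; the identity
\[
\|f_n-f\|^2=\|f_n\|^2-2\operatorname{Re}\<f_n,f\>+\|f\|^2
\]
then tends to $2\|f\|^2-2\|f\|^2=0$, proving norm convergence.

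There is no real obstacle here; the only point requiring a bit of care is verifying in the converse of (i) that the pointwise limit $f$ actually lies in $H^2$ before quantifying the weak convergence. Using weak compactness of balls in $H^2$ together with uniqueness of the pointwise limit handles this cleanly.
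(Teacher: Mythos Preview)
Your proof is correct. The paper does not actually supply a proof of this lemma; it merely states it as ``the $H^2$ version of a result that holds in arbitrary reproducing kernel Hilbert spaces'' and moves on. Your argument via the density of the span of the reproducing kernels, weak compactness of the closed ball, and the standard Hilbert space identity for $\|f_n-f\|^2$ is exactly the expected justification and fills in what the paper leaves implicit.
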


We recall  \cite{Sa} that a bounded linear operator $A$ on $K_u$ is called a \emph{truncated Toeplitz operator} if there exists a function $\varphi\in L^2$ (called a \emph{symbol} of $A$) such that $$Af=P_{K_u}(\varphi f)$$ for every bounded function $f\in K_u$.  The truncated Toeplitz operators on ${K_u}$ form a weakly closed subspace $\mathcal {T}_u$ of $\LL({K_u})$. 
There is a simple characterization of the operators in $\mathcal{T}_u$ that does not require a symbol. The space
\[
K_u^0=\{ g\in K_u: Sg\in K_u\}.
\]
is closed in $K_u$ and it has codimension 1. Its orthogonal complement is generated the vector by $S^*u=\overline{\chi}(u-u(0))$. The following result is \cite[Theorem 8.1]{Sa}.

 \begin{lemma}\label{le:characterization of TTOs}
  A bounded linear operator $A$ on ${K_u}$ belongs to $\TT_u$ if and only if
  \begin{equation}\label{eq:characterization}
    Q_A(f)=Q_A(Sf)
  \end{equation}
  for every $f\in K_u^0$.
   \end{lemma}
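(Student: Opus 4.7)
The forward implication is a short computation. If $A\in\TT_u$ has symbol $\varphi\in L^2$ and $f\in K_u^0\cap L^\infty$, then $Sf\in K_u\cap L^\infty$ as well, and
\[
Q_A(f)-Q_A(Sf)=\int_\bbT\varphi\bigl(|f|^2-|Sf|^2\bigr)\,dm=0,
\]
since $|\chi|\equiv 1$ on $\bbT$. One extends from bounded to arbitrary $f\in K_u^0$ by approximation: bounded elements are dense in $K_u^0$ (linear combinations of the reproducing kernels $k_\lambda^u$ lie in $L^\infty\cap K_u$ and span a dense subspace of $K_u$, and projecting onto $K_u^0$ preserves boundedness), while $Q_A$ and $Q_A\circ S$ are continuous on $K_u$.

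For the converse, I first polarize. The sesquilinear form
\[
\beta(f,g):=\langle Af,g\rangle-\langle ASf,Sg\rangle\qquad(f,g\in K_u^0)
\]
has vanishing quadratic form $\beta(f,f)=Q_A(f)-Q_A(Sf)=0$, hence $\beta\equiv 0$. Since $K_u$ is $S^*$-invariant, $\langle ASf,Sg\rangle=\langle S^*ASf,g\rangle$, and the polarized identity rewrites as
\[
(A-S^*AS)K_u^0\ \subseteq\ (K_u^0)^{\perp_{K_u}}=\bbC\cdot S^*u,
\]
a truncated analog of the Brown--Halmos identity $T=S^*TS$. Let $k_0^u:=1-\overline{u(0)}u$ be the reproducing kernel of $K_u$ at $0$; a direct calculation on $\bbT$ gives $C_u k_0^u=\overline{\chi}(u-u(0))=S^*u$.

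Next, I construct the candidate symbol by imitating the Brown--Halmos reconstruction $\varphi=T1+\overline{T^*1}-\langle T1,1\rangle$ for Toeplitz operators on $H^2$, replacing $1$ with $k_0^u$ and adjusting the scalar factors for the norm $\|k_0^u\|^2=k_0^u(0)=1-|u(0)|^2$. This produces $\varphi\in L^2$ built linearly from $Ak_0^u$, $\overline{A^*k_0^u}$, and a scalar multiple of $\overline{k_0^u}$. Let $A_\varphi$ denote the TTO with this symbol and set $B:=A-A_\varphi$. By the forward direction $A_\varphi$ satisfies the polarized identity, hence so does $B$; and the choice of $\varphi$ forces $Bk_0^u=0$ and $B^*k_0^u=0$.

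The main obstacle is the final step: concluding $B=0$ from these two vanishings and the polarized identity for $B$. Taking complex conjugates of the hypothesis shows $Q_{A^*}(f)=\overline{Q_A(f)}$, so the polarized identity holds also for $B^*$. The plan is to iterate these symmetric identities for $B$ and $B^*$ along the joint $S,S^*$-orbit of $k_0^u$ (which is total in $K_u$). Each application of the identity preserves the vanishing only up to a one-dimensional error in $\bbC\cdot S^*u$, but the symmetry between $B$ and $B^*$ pins down and cancels these errors, eventually forcing $B=0$ on a dense subspace of $K_u$, and hence $B=0$ throughout.
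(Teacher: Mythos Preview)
The paper does not prove this lemma; it simply quotes it as \cite[Theorem~8.1]{Sa}. So there is no ``paper's proof'' to compare against, and your proposal must stand on its own.

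Your forward implication is fine: for bounded $f\in K_u^0$ one has $Q_A(f)-Q_A(Sf)=\int_{\bbT}\varphi(|f|^2-|\chi f|^2)\,dm=0$, and bounded vectors are dense in $K_u^0$ because $S^*u\in L^\infty$. The polarization step in the converse is also correct and, after rewriting with the compressed shift $S_u=P_{K_u}S|_{K_u}$, it yields the rank-two identity
\[
A-S_uAS_u^*=\psi\otimes k_0^u+k_0^u\otimes\chi
\]
for some $\psi,\chi\in K_u$ (this is exactly Sarason's intermediate characterization).

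The gap is in your final step. You propose to choose $\varphi$ so that $B=A-A_\varphi$ satisfies $Bk_0^u=B^*k_0^u=0$, and then to ``iterate along the joint $S,S^*$-orbit of $k_0^u$'', asserting that the one-dimensional errors in $\bbC\,S^*u$ cancel by symmetry. This is not substantiated: the conditions $Bk_0^u=B^*k_0^u=0$ do \emph{not} by themselves force the residual rank-two part of $B-S_uBS_u^*$ to vanish (apply both sides to $k_0^u$ and you still get $-S_uBS_u^*k_0^u$ on the left, which need not be zero), so each ``iteration'' genuinely introduces an error and you have given no mechanism that makes these errors telescope. Moreover $k_0^u$ need not lie in $K_u^0$, so you cannot even feed it directly into your displayed inclusion $(A-S^*AS)K_u^0\subseteq\bbC\,S^*u$.

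The clean route---and this is what Sarason actually does---is to choose the symbol so as to match the rank-two perturbation itself rather than the two vector values: pick $\varphi$ (built from $\psi$ and $\overline{\chi}$) so that $A_\varphi-S_uA_\varphi S_u^*=\psi\otimes k_0^u+k_0^u\otimes\chi$ with the \emph{same} $\psi,\chi$. Then $B=A-A_\varphi$ satisfies $B=S_uBS_u^*$ exactly, hence $B=S_u^{\,n}B(S_u^*)^n$ for all $n$; since $(S_u^*)^n=S^{*n}|_{K_u}\to0$ strongly, $B=0$. Your Brown--Halmos analogy is the right intuition, but the correct normalization is to kill the whole operator $B-S_uBS_u^*$, not just $Bk_0^u$ and $B^*k_0^u$.
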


  Fix $a\in\mathbb D$ and denote by
  $
  b_a(z)=({z-a})/({1-\bar a z})
  $, $z\in\mathbb D$, the corresponding Blaschke factor.
The following result is used in~\cite[Section 4]{CGRW} as well as \cite{KLP}.

\begin{lemma}\label{le:omega_a}
There is a unitary operator $\omega_a:K_u\to K_{u\circ b_a}$  defined by
\begin{equation}\label{eq:omega_a}
  \omega_a(f)=\frac{\sqrt{1-|a|^2}}{1-\bar a \chi} f\circ b_a,\quad f\in K_u.
\end{equation}
We have 
$
\omega_a C_u=C_{u\circ b_a} \omega_a,
$
and $\omega_a\TT_u\omega_a^*=\TT_{u\circ b_a}$.
\end{lemma}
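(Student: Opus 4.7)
The plan is to extend $\omega_a$ to a unitary operator $U_a$ on all of $L^2(\bbT)$ by the same formula, verify that $U_a$ preserves $H^2$ and the relevant model-space decompositions, and then deduce the three claims in succession.

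First I would show that $U_a f = \frac{\sqrt{1-|a|^2}}{1-\bar a \chi}\, f\circ b_a$ is a unitary on $L^2(\bbT)$. The key observation is that on $\bbT$ the Jacobian of the Möbius map $b_a$ equals $|b_a'(z)| = \frac{1-|a|^2}{|1-\bar a z|^2}$, so that the weight in the definition is exactly $\sqrt{|b_a'|}$; performing the change of variable $w = b_a(z)$ in $\|U_a f\|_{L^2}^2$ directly yields $\|f\|_{L^2}^2$. Surjectivity follows from the identity $U_a U_{-a} = I$, which is a short pointwise calculation using $b_a\circ b_{-a}=\mathrm{id}$ and the cancellation of the weight factors. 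Because $b_a\colon\bbD\to\bbD$ is analytic and $(1-\bar a \chi)^{-1}\in H^\infty$, $U_a$ sends $H^2$ into $H^2$, hence (by unitarity) onto $H^2$. The product rule $U_a(ug) = (u\circ b_a)\, U_a g$ gives $U_a(uH^2)=(u\circ b_a)H^2$, so $U_a$ restricts to a unitary $K_u\to K_{u\circ b_a}$; this is $\omega_a$, proving the first assertion.

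For the conjugation intertwining, I would compute both $(\omega_a C_u f)(z)$ and $(C_{u\circ b_a}\omega_a f)(z)$ pointwise on $\bbT$, reducing each to the expression $\frac{\sqrt{1-|a|^2}}{z-a}\, u(b_a(z))\,\overline{f(b_a(z))}$. The two ingredients needed are the identities $\overline{1-\bar a z} = (z-a)/z$ and $\overline{b_a(z)} = (1-\bar a z)/(z-a)$, both valid on $\bbT$ because $\bar z = 1/z$ there; once these are in hand, the verification is a short manipulation of factors.

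For the third claim I would use the symbol description: every $A\in\TT_u$ is (on the dense subspace $K_u\cap L^\infty$) of the form $A_\varphi^u f = P_{K_u}(\varphi f)$ for some $\varphi\in L^2$. Since $U_a$ is unitary on $L^2$, maps $H^2$ to $H^2$, and sends $uH^2$ onto $(u\circ b_a)H^2$, it also sends $\overline{\chi H^2}$ to itself, hence it intertwines the $L^2$-projections $P_{K_u}$ and $P_{K_{u\circ b_a}}$. The central computation is
\[
U_a\bigl(\varphi\cdot U_a^* g\bigr) = (\varphi\circ b_a)\, g, \qquad g\in K_{u\circ b_a},
\]
which is a direct pointwise check: the weight in $U_a^* g$ evaluated at $b_a(z)$ combines with the weight in $U_a$ via the identity $1+\bar a b_a(z) = (1-|a|^2)/(1-\bar a z)$, producing total cancellation. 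Applying $P_{K_{u\circ b_a}}$ then yields $\omega_a A_\varphi^u \omega_a^* = A_{\varphi\circ b_a}^{u\circ b_a}$, so $\omega_a \TT_u \omega_a^* \subseteq \TT_{u\circ b_a}$. Running the same argument with $-a$ in place of $a$ gives the reverse inclusion.

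The step I expect to require the most care is the bookkeeping in the third part — tracking the several similar-looking factors $1-\bar a z$, $1-a\bar z$, $1+\bar a b_a(z)$, and $\overline{b_a(z)}$ on $\bbT$ without confusing them, and making sure the weak closure of the symbol operators is correctly handled (which is automatic, since $\omega_a$ is unitary and therefore implements a weakly bicontinuous $*$-isomorphism of $\LL(K_u)$ onto $\LL(K_{u\circ b_a})$).
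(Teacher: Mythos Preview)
Your proposal is correct and complete. Note, however, that the paper does not actually supply a proof of this lemma: it is stated with a citation to \cite[Section~4]{CGRW} and \cite{KLP}, where the result already appears, so there is no in-paper argument to compare against. What you have written is the standard verification --- the change-of-variable computation showing $U_a$ is unitary on $L^2$, the pointwise check on $\bbT$ for the conjugation intertwining, and the symbol calculation $\omega_a A_\varphi^u \omega_a^* = A_{\varphi\circ b_a}^{u\circ b_a}$ --- and all of your intermediate identities (in particular $1+\bar a\, b_a(z)=(1-|a|^2)/(1-\bar a z)$ and $\overline{b_a(z)}=(1-\bar a z)/(z-a)$ on $\bbT$) are right.
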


\section{Truncated Toeplitz operators and conjugations}

Suppose that $u$ is an inner function and $A\in \mathcal{T}_u$. Then $A$ is $C_u$ symmetric, that is, $Q_A(f)=Q_A(C_uf)$ for every $f\in K_u$. Let $v$ be an inner divisor of the function $u$.  Then $K_v\subset K_u$ and it was observed in \cite{KLP} that $P_vA|K_v$ is also $C_v$-symmetric.  The authors of
\cite{KLP} formulate the following conjecture.

\begin{conjecture}\label{conjecture}
A bounded linear operator $A$ on $K_u$ belongs to $\TT_u$ if and only if, for every inner divisor $v$ of $u$, the compression $P_vA|K_v$ is $C_v$-symmetric.
\end{conjecture}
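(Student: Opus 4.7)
The forward direction is already noted in~\cite{KLP}; I focus on the converse. The plan is first to extract from all the assumed conjugation conditions a single workable identity, and then to match it against Sarason's criterion (Lemma~\ref{le:characterization of TTOs}). For any inner divisor $v$ of $u$ and any $f\in K_{u/v}$, the hypothesized $C_{u/v}$-symmetry of $P_{K_{u/v}}A|_{K_{u/v}}$ yields $Q_A(f)=Q_A(C_{u/v}f)$. Since $C_{u/v}f\in K_{u/v}\subset K_u$, the $C_u$-symmetry of $A$ (the case $v=u$) combined with Lemma~\ref{le:basic} gives $Q_A(C_{u/v}f)=Q_A(C_uC_{u/v}f)=Q_A(vf)$. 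Hence
\[
Q_A(f)=Q_A(vf),\qquad f\in K_{u/v},
\]
for every inner divisor $v$ of $u$, and by Lemma~\ref{le:characterization of TTOs} it remains to show $Q_A(g)=Q_A(\chi g)$ for every $g\in K_u^0$.

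Consider first the case in which $u$ has a zero $\alpha\in\bbD$. I would use Lemma~\ref{le:omega_a} to transport the whole problem by $\omega_{-\alpha}$ to the inner function $u\circ b_{-\alpha}$, which vanishes at $0$. Inner divisors of $u$ correspond bijectively to inner divisors of $u\circ b_{-\alpha}$ under $w\mapsto w\circ b_{-\alpha}$ (since $b_{-\alpha}$ and $b_\alpha$ are inverses), and by the intertwining $\omega_{-\alpha}C_w=C_{w\circ b_{-\alpha}}\omega_{-\alpha}$ the conjugation hypotheses transfer to $\omega_{-\alpha}A\omega_{-\alpha}^*$. Assuming then that $u(0)=0$, the function $\chi$ is an inner divisor of $u$, and the displayed identity applied with $v=\chi$ gives $Q_A(f)=Q_A(\chi f)$ on $K_{u/\chi}$. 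When $u(0)=0$ one has $S^*u=(u-u(0))/\chi=u/\chi$, and the decomposition $K_u=K_{u/\chi}\oplus\bbC\cdot(u/\chi)$ shows that the orthogonal complement of $K_{u/\chi}$ in $K_u$ is $\bbC\cdot S^*u$, which is precisely the orthogonal complement of $K_u^0$. Therefore $K_{u/\chi}=K_u^0$, and Sarason's criterion gives $A\in\TT_u$. Notice that only the two conjugations $C_u$ on $K_u$ and $C_{u/\chi}$ on $K_{u/\chi}$ actually entered the argument, consistent with the ``two complex symmetries'' claim in the introduction.

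The remaining case, in which $u$ is singular and therefore has no zero in $\bbD$, is where I expect the real work to lie. Here $\chi$ is no longer an inner divisor of $u$, and the reduction via $\omega_a$ does not help since $u\circ b_a$ is again singular. My plan is to verify Sarason's condition by an approximation argument: given $g\in K_u^0$, one attempts to produce a sequence of inner divisors $v_n$ of $u$ and vectors $f_n\in K_{u/v_n}$ such that $f_n\to g$ and $v_nf_n\to\chi g$ in the norm of $K_u$, and then pass to the limit in $Q_A(f_n)=Q_A(v_nf_n)$ using continuity of $Q_A$. Lemma~\ref{le:weak convergence general}(ii) should be useful here, since it replaces norm convergence by the more tractable combination of pointwise convergence on $\bbD$ and convergence of norms. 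Separability of $K_u^0$ would then ensure that only countably many divisors $v_n$ are needed, matching the ``countable collection of complex symmetries'' promised in the introduction. Producing the approximating sequences is the delicate point, as the divisor lattice of a singular inner function has no atoms to exploit directly, and this is the main obstacle I foresee.
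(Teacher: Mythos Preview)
Your treatment of the case in which $u$ has a zero is correct and coincides with the paper's proof of Theorem~\ref{th:with zero}: the same identity $Q_A(f)=Q_A(vf)$ is extracted, the same reduction via $\omega_a$ to $u(0)=0$ is made, and the same observation $K_{u/\chi}=K_u^0$ closes the argument.

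In the singular case, however, the plan you sketch cannot be carried out as stated. You propose to find inner divisors $v_n$ of $u=e_\nu$ and vectors $f_n\in K_{u/v_n}$ with $f_n\to g$ and $v_nf_n\to\chi g$ in norm. Every inner divisor of $e_\nu$ has the form $e_{\sigma}$ with $0\le\sigma\le\nu$, so $|v_n(0)|=e^{-\sigma_n(\bbT)}\ge e^{-\nu(\bbT)}>0$. Norm convergence implies pointwise convergence on $\bbD$, so at $z=0$ one would get $f_n(0)\to g(0)$ and $v_n(0)f_n(0)\to 0$; since $|v_n(0)|$ is bounded below, this forces $g(0)=0$. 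But a generic $g\in K_u^0$ does not vanish at the origin (the subspace $\{g\in K_u^0:g(0)=0\}$ has codimension one in $K_u^0$), so the approximation you seek is impossible in general. Replacing norm convergence by weak convergence does not help directly either, because $Q_A$ is not weakly continuous.

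The paper circumvents this by \emph{differentiating} rather than approximating. One chooses measures $\mu_m\le\nu$ concentrating at a point $\eta\in\bbT$ with $\mu_m(\bbT)\to0$ (Lemma~\ref{le:convergence}), applies your identity $Q_A(h)=Q_A(e_{\mu_m}h)$ to $h=(\chi-\eta)f$, and forms the difference quotient
\[
0=\frac{Q_A(e_{\mu_m}(\chi-\eta)f)-Q_A((\chi-\eta)f)}{\mu_m(\bbT)}.
\]
Expanding this as a sum of two sesquilinear terms, one factor in each term converges in norm (to $(\chi-\eta)f$) while the other, namely $\dfrac{e_{\mu_m}-1}{\mu_m(\bbT)}(\chi-\eta)f$, converges \emph{weakly} to $(\chi+\eta)f$; the factor $(\chi-\eta)$ is exactly what tames the singularity of $(e_{\mu_m}-1)/\mu_m(\bbT)$ near $\eta$. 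The limiting identity
\[
\langle A(\chi+\eta)f,(\chi-\eta)f\rangle+\langle A(\chi-\eta)f,(\chi+\eta)f\rangle=0
\]
then simplifies algebraically to $Q_A(\chi f)=Q_A(f)$. A separate density lemma (Lemma~\ref{le:approximation}) guarantees that the vectors $f$ for which this works fill out $K_u^0$. The differentiation idea and the choice $h=(\chi-\eta)f$ are the missing ingredients in your proposal.
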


This conjecture is proved in~\cite{KLP} for certain Blaschke products $u$, namely, Blaschke products with a single zero, finite Blaschke products with simple zeros, and interpolating Blaschke products. The arguments rely on a characterization \cite{CRW} of the class $\mathcal{T}_u$ in terms of its matrix entries in a particular orthonormal basis for $K_u$.
In this section, we prove the conjecture for those inner functions $u$ that have at least one zero.  The case of singular inner functions is treated in the following section.

\begin{theorem}\label{th:with zero}
Suppose that $u\in H^\infty$ is an inner function and $u(a)=0$ for some $a\in\bbD$. Then an operator $A\in\LL(K_u)$ belongs to $\TT_u$ if and only if it is $C_u$-symmetric and $Q_A(C_{u/b_a}f)=Q_A(f)$ for every $f\in K_{u/b_a}$.
\end{theorem}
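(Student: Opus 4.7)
The forward direction is immediate from material already recalled: any $A\in\TT_u$ is $C_u$-symmetric, and its compression $P_{u/b_a}A|_{K_{u/b_a}}$ is $C_{u/b_a}$-symmetric (as observed just before the conjecture), which reads precisely as $Q_A(C_{u/b_a}f)=Q_A(f)$ for every $f\in K_{u/b_a}$.

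For the converse I would reduce first to the case $a=0$ by means of Lemma~\ref{le:omega_a}, applied with $-a$ in place of $a$. Setting $w=u\circ b_{-a}$, one has $w(0)=u(a)=0$, and since $b_a\circ b_{-a}=\chi$, also $w/\chi=(u/b_a)\circ b_{-a}$. The unitary $\omega_{-a}\colon K_u\to K_w$ restricts (by the same defining formula) to a unitary $K_{u/b_a}\to K_{w/\chi}$ and simultaneously intertwines $C_u$ with $C_w$, $C_{u/b_a}$ with $C_{w/\chi}$, and $\TT_u$ with $\TT_w$. A direct transfer shows that $\omega_{-a}A\omega_{-a}^*$ satisfies the two hypotheses with respect to $w$, so I may assume $u(0)=0$ and $b_a=\chi$.

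Under that normalization the decisive observation is $K_u^0=K_{u/\chi}$. Indeed, $(K_u^0)^\perp$ is spanned by $S^*u=(u-u(0))/\chi=u/\chi$, while the standard orthogonal decomposition $K_u=K_{u/\chi}\oplus(u/\chi)K_\chi=K_{u/\chi}\oplus\mathbb C\,(u/\chi)$ identifies $K_{u/\chi}$ as the orthogonal complement of $\mathbb C\,(u/\chi)$ in $K_u$. For $f\in K_u^0=K_{u/\chi}$ the vector $\chi f=Sf$ lies in $K_u$, so the $C_u$-symmetry of $A$ applied to $\chi f$, combined with Lemma~\ref{le:basic} in the form $C_u(\chi f)=C_{u/\chi}(f)$ and the hypothesized $C_{u/\chi}$-symmetry applied to $f$, yields
\[
Q_A(Sf)=Q_A(\chi f)=Q_A(C_u(\chi f))=Q_A(C_{u/\chi}(f))=Q_A(f).
\]
Lemma~\ref{le:characterization of TTOs} then forces $A\in\TT_u$. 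The only step with any content is the identification $K_u^0=K_{u/\chi}$ once $u(0)=0$; after that the two symmetry hypotheses combine into the displayed three-equality line via Lemma~\ref{le:basic}, and I do not foresee any further obstacle.
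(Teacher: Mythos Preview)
Your proof is correct and follows essentially the same route as the paper: reduce to $a=0$ via the unitary $\omega_{\pm a}$ of Lemma~\ref{le:omega_a}, identify $K_u^0=K_{u/\chi}$ when $u(0)=0$, and combine the two symmetries through Lemma~\ref{le:basic} to obtain $Q_A(Sf)=Q_A(f)$ for $f\in K_u^0$, so that Lemma~\ref{le:characterization of TTOs} applies. The only cosmetic differences are the order in which you present the reduction and the core case, and your use of $\omega_{-a}$ on $K_u$ rather than the paper's $\omega_a$ on $K_{u\circ b_{-a}}$.
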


\begin{proof}
Suppose first that   $a=0$ and thus $b_a=\chi$.  If $f\in K_u$, then $Sf\in K_u$ if and only if $f\in K_{u/\chi}$. For such a function $f$ we have  $C_uC_{u/\chi}f=\chi f=Sf$ by Lemma~\ref{le:basic}. The two symmetry hypotheses in the statement imply that
\[
Q_A(Sf)=Q_A(C_uC_{u/z}f)=Q_A(C_{u/\chi}f))=Q_A(f).
\]
It follows then from Lemma~\ref{le:characterization of TTOs}  that $A\in \TT_u$.

For the  general case $a\ne0$ we use Lemma~\ref{le:omega_a}. The inner function $v=u\circ b_{-a}$ satisfies $v(0)=0$, and the unitary map $\omega_a$ defined in~\eqref{eq:omega_a} yields by restriction unitary maps from $K_v$ onto $K_u$ and from $K_{v/\chi}$ to $K_{u/b_a}$ that intertwine the standard conjugations on these spaces. Therefore $\omega_a A \omega_a^*$ is $C_v$-symmetric, and its compression to $K_{v/\chi}$ is $C_{v/\chi}$-symmetric. By the first part of the proof, $\omega_a A \omega_a^*\in \TT_v$. It follows from Lemma~\ref{le:omega_a} that $A\in\TT_u$.
\end{proof}

We have thus proved a stronger version of the conjecture in case $u$ has a zero in $\bbD$: an operator $A$ on $K_u$ is a truncated Toeplitz operator if and only if the complex symmetry condition is satisfied by $A$ as well as by a single one of its compressions to model spaces.

\section{Singular inner functions}

Given a positive, singular Borel measure  $\nu$  on $\bbT$, we denote by $e_\nu$ the corresponding singular inner function, that is,
\begin{equation}\label{eq:definition of e_nu}
e_\nu(z) = \exp\left( -\int_\bbT \frac{\zeta+z}{\zeta-z} \, d\nu(\zeta)
\right),\quad z\in\mathbb D.
\end{equation}

\begin{lemma}\label{le:convergence}
Let $\nu$ be a nonzero, positive, singular Borel measure on $\bbT$. Then there exist $\zeta\in\mathbb T$ and a sequence of nonzero, positive Borel measures $\mu_n\le \nu$, $n\in\mathbb N$, such that:
\begin{itemize}
\item[(i)]
$\lim_{n\to\infty}e_{\mu_n}(z)=1$ for every  $z\in\bbD$, and
\item[(ii)] for every $g\in H^2$, the functions
\[
\frac{e_{\mu_n}-1}{\mu_n(\bbT)} (\chi-\eta)g,\quad n\in\mathbb N,
\]
converge weakly in $H^2$ to $(\chi+\eta)g$ as $n\to\infty$.
\end{itemize}
\end{lemma}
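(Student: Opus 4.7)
The plan is to localize $\mu_n$ near a suitably chosen $\eta\in\bbT$ in the support of $\nu$, arranged so that the diameter of $\operatorname{supp}\mu_n$ is controlled by $\mu_n(\bbT)$. Since $\nu$ is nonzero and singular with respect to the normalized arclength measure $m$, a standard differentiation argument shows that $\lim_{\delta\to 0^+}m(U_\delta(\eta))/\nu(U_\delta(\eta))=0$ for $\nu$-almost every $\eta\in\bbT$, where $U_\delta(\eta)$ denotes the arc of length $\delta$ centred at $\eta$; fix such a point $\eta$. If $\nu(\{\eta\})>0$, set $\mu_n:=(\nu(\{\eta\})/n)\delta_\eta$; otherwise set $\mu_n:=\nu|_{U_{\delta_n}(\eta)}$ for a sequence $\delta_n\downarrow 0$. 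In either case $\mu_n$ is nonzero, $\mu_n\le\nu$, $\mu_n(\bbT)\to 0$, and $\varepsilon_n:=\operatorname{diam}(\operatorname{supp}\mu_n)\le\mu_n(\bbT)$ for $n$ sufficiently large; moreover, as $\operatorname{supp}\mu_n$ shrinks to $\{\eta\}$, the probability measures $\tilde\mu_n:=\mu_n/\mu_n(\bbT)$ converge weak-$\ast$ to $\delta_\eta$.

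Part (i) is immediate: for fixed $z\in\bbD$ the map $w\mapsto(w+z)/(w-z)$ is bounded on $\bbT$, so $X_n(z):=\int_\bbT(w+z)/(w-z)\,d\mu_n(w)$ satisfies $|X_n(z)|\le C_z\mu_n(\bbT)\to 0$, and hence $e_{\mu_n}(z)=\exp(-X_n(z))\to 1$.

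For part (ii), the strategy is to apply Lemma~\ref{le:weak convergence general}(i), reducing the claim to pointwise convergence on $\bbD$ together with a uniform $H^2$-bound. Pointwise convergence follows from the factorization
$$\frac{e_{\mu_n}(z)-1}{\mu_n(\bbT)}=-\left(\int_\bbT\frac{w+z}{w-z}\,d\tilde\mu_n(w)\right)\cdot\frac{1-e^{-X_n(z)}}{X_n(z)};$$
the second factor tends to $1$ since $X_n(z)\to 0$, while the first factor tends to $(\eta+z)/(\eta-z)$ by weak-$\ast$ convergence of $\tilde\mu_n$. Multiplication by $(z-\eta)g(z)$ then produces the target $(\eta+z)g(z)=(\chi+\eta)(z)g(z)$.

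The principal obstacle is the uniform $L^2$-estimate. The key observation is that $e_{\mu_n}$ is a singular inner function, so on $\bbT\setminus\operatorname{supp}\mu_n$ the quantity $X_n(\zeta)$ is purely imaginary, whence $|e_{\mu_n}(\zeta)-1|\le|X_n(\zeta)|$. Combined with the elementary bound $|(w+\zeta)/(w-\zeta)|\le 2/|w-\zeta|$ for distinct $w,\zeta\in\bbT$, this yields, for $|\zeta-\eta|\ge 2\varepsilon_n$ and $w\in\operatorname{supp}\mu_n$, the inequality $|w-\zeta|\ge|\zeta-\eta|/2$, and hence $|X_n(\zeta)|\cdot|\zeta-\eta|\le 4\mu_n(\bbT)$; while for $|\zeta-\eta|<2\varepsilon_n$ one uses $|e_{\mu_n}-1|\le 2$ together with $|\zeta-\eta|<2\varepsilon_n\le 2\mu_n(\bbT)$. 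In either regime
$$|(e_{\mu_n}(\zeta)-1)(\zeta-\eta)|\le 4\mu_n(\bbT)\qquad\text{for a.e. }\zeta\in\bbT,$$
which gives $\|(e_{\mu_n}-1)(\chi-\eta)g/\mu_n(\bbT)\|_{L^2}\le 4\|g\|_{L^2}$ for every $g\in H^2$, and Lemma~\ref{le:weak convergence general}(i) concludes the proof.
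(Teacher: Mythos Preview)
Your proof is correct and follows the same overall strategy as the paper (localize $\mu_n$ near a point $\eta$ where $\nu$ dominates arclength, verify pointwise convergence on $\bbD$, establish a uniform $H^\infty$ bound on $(e_{\mu_n}-1)(\chi-\eta)/\mu_n(\bbT)$, and invoke Lemma~\ref{le:weak convergence general}(i)), but the execution of the uniform bound is genuinely different. The paper rescales the restriction, setting $d\mu_n=\sqrt{|I_n|/\nu(I_n)}\,\chi_{I_n}\,d\nu$ so that $\mu_n(\bbT)=\sqrt{|I_n|\nu(I_n)}$ lies between $|I_n|$ and $\nu(I_n)$; this balancing is what makes its two-regime estimate on $\bbD$ close up. You instead take $\mu_n=\nu|_{U_{\delta_n}(\eta)}$ with no rescaling and carry out the estimate on $\bbT$, exploiting the fact that for $\zeta\in\bbT\setminus\operatorname{supp}\mu_n$ the exponent $X_n(\zeta)$ is purely imaginary, whence the sharp inequality $|e_{\mu_n}(\zeta)-1|\le|X_n(\zeta)|$ holds with no exponential correction factor. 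This is what allows you to dispense with the square-root scaling: the crucial relation you need is only $\varepsilon_n\le\mu_n(\bbT)$, which follows directly from the density condition at $\eta$. Your argument is a bit more elementary in that it uses the inner-function structure (unimodular boundary values) rather than interior estimates; the paper's route, on the other hand, works uniformly on the closed disk once one notes that $\text{Re}(-X_n)\le0$ there.
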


\begin{proof}
Choose $\eta\in\bbT$ and a sequence $\{I_n \}_{n\in\mathbb N}$ of arcs in $\mathbb T$, symmetric about $\eta$, with length $|I_n|=1/n$, such that  $\lim_{n\to\infty}(\nu(I_n)/|I_n|)=+\infty $. This is possible since $\nu$ is singular. Define the measures $\mu_n$ by
\[
d\mu_n=\sqrt{\frac{|I_n|}{\nu(I_n)}}\, \chi_{I_n}\,d\nu,\quad n\in\mathbb N.
\]
By the maximum modulus principle, condition (i) only needs to be verified at $z=0$, and this is immediate because $e_{\mu_n}(0)=e^{-\mu_n (\mathbb T)}$. In fact, we have
\[
\lim_{n\to\infty}\frac{e_{\mu_n}(z)-1}{\mu_n(\bbT)}=  \frac{z+\eta}{z-\eta},\quad z\in\mathbb D.
\]
Lemma~\ref{le:weak convergence general} shows that (ii) is true as well once we verify that
\begin{equation}\label{eq:uniform boundedness}
\sup_{z\in\bbD, n\in\bbN} |z-\eta| \frac{|e_{\mu_n}(z)-1|}{\mu_n(\bbT)}<\infty.
\end{equation}
Observe first that, if $z\in\mathbb D$  and $|z-\eta|<10/n=10|I_n|$,
\[
|z-\eta| \frac{|e_{\mu_n}(z)-1|}{\mu_n(\bbT)}\le \frac{20|I_n|}{\sqrt{|I_n|\nu(I_n)}}
=20 \sqrt{\frac{|I_n|}{\nu(I_n)}}, 
\]
and the last quantity tends to $0$ by the choice of $I_n$.  If $|z-\eta|\ge10/n$, we use the inequalities
$$
|e^\lambda-1|\le |\lambda|e^{|\lambda|}, \quad\lambda\in\mathbb C,
$$
and
$$
\left|\frac{\zeta-z}{\zeta+z}\right|<3,\quad \zeta\in I_n, z\in\mathbb{D}, |z-\zeta|>\frac{10}{n},
$$
to deduce that
$$
|e_{\mu_n}(z)-1|\le 3\mu_n(\mathbb T)e^{3\mu_n(\mathbb T)}.
$$
For such values of $z$ we see that
\[
|z-\eta| \frac{|e_{\mu_n}(z)-1|}{\mu_n(\bbT)}\le 6e^{3\mu_n(\mathbb T)}<6e^{3\nu(\mathbb T)}.
\]
This concludes the proof of the lemma.
\end{proof}

We need one more technical result before establishing Conjecture \ref{conjecture} for $u=e_\nu$. Recall that $K_u^0$ consists of those vectors $f\in K_u$ with the property that $Sf$ also belongs to $K_u$. Clearly, $K_v^0\subset K_u^0$ if $v$ is an inner divisor of $u$.

\begin{lemma}\label{le:approximation}
Suppose that $u\in H^\infty$ is an inner function and $\{u_n\}_{n\in\mathbb N} $ is a sequence of inner divisors of $u$ such that $u_{n+1}$ divides $u_n$, $n\in\mathbb N$, and $\lim_{n\to\infty}|u_n(0)|=1$. Then $\bigcup_{n\in\mathbb N}K_{u/u_n}^0$ is dense in $K_u^0$.
\end{lemma}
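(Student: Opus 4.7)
The plan is to show that for each $f \in K_u^0$ a concrete sequence in $\bigcup_n K_{u/u_n}^0$ converges to $f$ in norm. Set $v_n = u/u_n$; since $u_{n+1}$ divides $u_n$, the inner function $v_n$ divides $v_{n+1}$, so both $K_{v_n}$ and $K_{v_n}^0$ form an increasing chain in $K_u$. Let $P_n$ and $Q_n$ denote the orthogonal projections of $K_u$ onto $K_{v_n}$ and $K_{v_n}^0$ respectively. The goal is $Q_n f \to f$.

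First I would show $P_n f \to f$ for every $f \in K_u$. From the factorization $u = v_n u_n$ one has $K_u \ominus K_{v_n} = v_n K_{u_n}$, so $(I - P_n) f = v_n h_n$ with $h_n \in K_{u_n}$ and $\|h_n\| \le \|f\|$. The reproducing kernel of $K_{u_n}$ at $z \in \bbD$ has norm $\sqrt{(1 - |u_n(z)|^2)/(1 - |z|^2)}$, and the Schwarz--Pick inequality together with $|u_n(0)| \to 1$ forces $|u_n(z)| \to 1$ uniformly on compacts of $\bbD$. Hence $((I - P_n)f)(z) \to 0$ pointwise on $\bbD$, and Lemma~\ref{le:weak convergence general} yields weak convergence $P_n f \to f$; the identity $\|(I - P_n)f\|^2 = \|f\|^2 - \langle P_n f, f\rangle$, valid for any self-adjoint projection, immediately upgrades this to norm convergence.

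Next, assume $f \in K_u^0$ and control the secondary error $\|P_n f - Q_n f\|$. Inside $K_{v_n}$ the orthogonal complement of $K_{v_n}^0$ is the one-dimensional span of $S^* v_n$, so using $S^* v_n \in K_{v_n}$ and $K_{v_n}^0 \subset K_{v_n}$,
\[
P_n f - Q_n f = \frac{\langle f, S^* v_n \rangle}{\|S^* v_n\|^2}\, S^* v_n.
\]
The denominator $\|S^* v_n\|^2 = 1 - |v_n(0)|^2$ tends to $1 - |u(0)|^2$, which is positive unless $u$ is a unimodular constant (in which case $K_u = \{0\}$ and the lemma is vacuous). For the numerator, $f \in K_u^0$ gives $Sf \in K_u = K_{v_n} \oplus v_n K_{u_n}$, so we may write $Sf = a_n + v_n b_n$ with $a_n \in K_{v_n}$, $b_n \in K_{u_n}$; then
\[
\langle f, S^* v_n\rangle = \langle Sf, v_n\rangle = \langle v_n b_n, v_n\rangle = b_n(0).
\]
The reproducing kernel bound in $K_{u_n}$ gives $|b_n(0)| \le \|b_n\|\sqrt{1 - |u_n(0)|^2} \le \|f\|\sqrt{1 - |u_n(0)|^2} \to 0$, so $\|P_n f - Q_n f\| \to 0$.

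Combining, $\|f - Q_n f\| \le \|f - P_n f\| + \|P_n f - Q_n f\| \to 0$, and since $Q_n f \in K_{v_n}^0 = K_{u/u_n}^0$ the lemma follows. The main subtlety, and where the hypothesis $f \in K_u^0$ (versus merely $f \in K_u$) enters decisively, lies in controlling the secondary correction $P_n f - Q_n f$: the naive projection $P_n f$ sits in $K_{v_n}$ but need not lie in $K_{v_n}^0$, and the correction along $S^* v_n$ is small only because $\langle f, S^* v_n\rangle$ can be rewritten as the value at $0$ of the $K_{u_n}$-component of $Sf$, which is squeezed to $0$ by the reproducing kernel estimate as $|u_n(0)| \to 1$.
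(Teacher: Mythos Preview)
Your proof is correct and follows the same overall strategy as the paper's: approximate $f\in K_u^0$ by $Q_nf = P_nf - (P_n-Q_n)f$, show $P_nf\to f$, and then show the rank-one correction along $S^*v_n$ vanishes.

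The technical execution of each sub-step differs. For $P_nf\to f$, the paper observes that the least common inner multiple of the $v_n=u/u_n$ is $u$, so $\bigcup_n K_{v_n}$ is dense in $K_u$ and hence $P_{v_n}\to P_u$ strongly; you instead use Schwarz--Pick together with the reproducing-kernel bound in $K_{u_n}$ to get pointwise decay of $(I-P_n)f$ and then upgrade weak to norm convergence. For the rank-one correction, the paper shows $v_n\to u$ in $H^2$ (via Lemma~\ref{le:weak convergence general}(ii)), hence $S^*v_n\to S^*u$ in norm and the rank-one projections onto $\bbC S^*v_n$ converge strongly to the projection onto $\bbC S^*u$, which annihilates $K_u^0$; you instead compute $\langle f,S^*v_n\rangle = b_n(0)$ directly and squeeze it with the reproducing-kernel estimate $|b_n(0)|\le\|f\|\sqrt{1-|u_n(0)|^2}$. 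The paper's route is a bit more abstract and economical; yours is more explicit, gives a quantitative rate, and makes transparent exactly where the hypothesis $f\in K_u^0$ (rather than $f\in K_u$) enters.
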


\begin{proof}We can, and do, assume without loss of generality that $u_n(0)\ge0$, $n\in\mathbb N$. The least common inner multiple of the functions $\{u/u_n\}_{n\in\mathbb N}$ is equal to $u$, and thus $\bigcap_{n\in\mathbb N}(u/u_n)H^2=uH^2$. It follows that  $\bigcup_{n\in\mathbb N}K_{u/u_n}$ is dense in $K_u$ and therefore the sequence $\{P_{u/u_n}\}_{n\in\mathbb N}$ converges to $P_u$ in the strong operator topology.
	
It was noted earlier that the space $K_{u/u_n} \ominus K_{u/u_n}^0$ is generated by the vector $S^*(u/u_n)$. It follows from Lemma \ref{le:weak convergence general} that the sequence $\{u/u_n\}_{n\in\mathbb N}$ converges in the $H^2$ norm to $u$, and thus $\lim_{n\to\infty}S^*(u/u_n)=S^*u$.  If we denote by $P_n$ and $P$ the orthogonal projections onto the spaces $K_{u/u_n} \ominus K_{u/u_n}^0$ and $K_u\ominus K_u^0$, respectively, it follows that the sequence $\{P_n\}_{n\in\mathbb N}$ converges to $P$ in the strong operator topology. Given an arbitrary vector $f\in K_u ^0$, we have $f_n=P_{u/u_n}f-P_nP_{u/u_n}f\in K_{u/u_n}^0$ and $\lim_{n\to\infty}f_n=f-Pf=f$. The lemma follows. 
\end{proof}

We may now give the solution of the conjecture for singular inner functions.

\begin{theorem}\label{th:main singular}
Suppose that $\nu$ is a positive, singular Borel measure on $\bbT$. Let $A$ be an operator on $K_{e_\nu}$ that is $C_{e_\nu}$-symmetric, and such that for every positive Borel measure $\mu\le \nu$, the compression of $A$ to  $K_{e_\mu}$,  is  $C_{e_\mu}$-symmetric. Then $A\in\TT_{e_\nu}$.
\end{theorem}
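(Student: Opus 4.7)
The plan is to verify the Sarason criterion of Lemma~\ref{le:characterization of TTOs}, i.e., to show $Q_A(Sg) = Q_A(g)$ for every $g \in K_u^0$, where $u = e_\nu$. Mimicking the proof of Theorem~\ref{th:with zero}, I first distill a usable quadratic-form identity from the two symmetry hypotheses: for any $\mu \le \nu$ and any $f \in K_{e_{\nu - \mu}} = K_{u/e_\mu}$, Lemma~\ref{le:basic} gives $C_u C_{e_{\nu - \mu}} f = e_\mu f$, and combining $C_u$-symmetry of $A$ with the $C_{e_{\nu - \mu}}$-symmetry of the compression of $A$ to $K_{e_{\nu - \mu}}$ (furnished by the hypothesis applied to $\mu' = \nu - \mu \le \nu$) yields
\[
Q_A(e_\mu f) = Q_A(f), \qquad \mu \le \nu,\ f \in K_{e_{\nu - \mu}}.
\]
The remaining task is to extract the Sarason identity from this by a differentiation argument as $\mu \to 0$.

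Apply Lemma~\ref{le:convergence} to $\nu$ to obtain $\eta \in \bbT$ and measures $\mu_n \le \nu$ with $\mu_n(\bbT) \to 0$ and the stated weak convergence property; after passing to a subsequence I may assume the $\mu_n$ are decreasing. Lemma~\ref{le:approximation} applied to $u_n = e_{\mu_n}$ then shows that $\bigcup_n K_{e_{\nu - \mu_n}}^0$ is dense in $K_u^0$, so by norm-continuity of $g \mapsto Q_A(Sg) - Q_A(g)$ it suffices to prove the Sarason identity on this union. Fix $g \in K_{e_{\nu - \mu_n}}^0$ and set $f = (\chi - \eta) g = Sg - \eta g$. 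For every $m \ge n$, the inclusion $K_{e_{\nu - \mu_n}} \subset K_{e_{\nu - \mu_m}}$ shows $f \in K_{e_{\nu - \mu_m}}$, so the boxed identity applies: $Q_A(e_{\mu_m} f) = Q_A(f)$. Writing $e_{\mu_m} = 1 + \mu_m(\bbT)\phi_m$ with $\phi_m = (e_{\mu_m} - 1)/\mu_m(\bbT)$, expanding, and dividing by $\mu_m(\bbT)$ yields
\[
\langle A \phi_m f, f\rangle + \langle Af, \phi_m f\rangle + \mu_m(\bbT) \langle A\phi_m f, \phi_m f\rangle = 0.
\]

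By Lemma~\ref{le:convergence}(ii), $\phi_m f \to (\chi + \eta) g$ weakly in $H^2$; the vectors $\phi_m f$ all lie in $K_u$ because $f \in K_{e_{\nu - \mu_m}}$, and the limit $(\chi + \eta)g = Sg + \eta g$ also lies in $K_u$ since $g \in K_u^0$. Letting $m \to \infty$, the third term vanishes (since $\mu_m(\bbT) \to 0$ and $\|\phi_m f\|$ stays bounded) while the first two limit to
\[
\langle A(\chi + \eta)g, (\chi - \eta)g\rangle + \langle A(\chi - \eta)g, (\chi + \eta)g\rangle = 0.
\]
A polarization identity applied with $h = (\chi + \eta)g$, $k = (\chi - \eta)g$, so that $h + k = 2Sg$ and $h - k = 2\eta g$, rewrites the left side as $\tfrac{1}{2}[Q_A(h+k) - Q_A(h-k)] = 2(Q_A(Sg) - Q_A(g))$, giving the desired identity on the dense subset and hence, by continuity, on all of $K_u^0$.

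The main obstacle is domain bookkeeping: for the inner products with $A$ to be meaningful and for the weak $H^2$-limit $(\chi + \eta)g$ to be the correct limit of vectors in $K_u$, we need $\phi_m f \in K_u$, equivalently $f \in K_{e_{\nu - \mu_m}}$, for every sufficiently large $m$. This is precisely what is bought by reducing to $g$ in the dense subset $\bigcup_n K_{e_{\nu - \mu_n}}^0$ and using the fact that the $\mu_n$ may be arranged to be decreasing, so that the spaces $K_{e_{\nu - \mu_m}}$ form an increasing chain and one can apply the boxed identity to a single $f$ for all $m$ beyond some index.
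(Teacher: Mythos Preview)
Your proof is correct and follows essentially the same route as the paper's: reduce via Lemmas~\ref{le:characterization of TTOs} and~\ref{le:approximation} to $g$ in the dense union $\bigcup_n K^0_{e_{\nu-\mu_n}}$, use the two symmetries to get $Q_A(e_{\mu_m}(\chi-\eta)g)=Q_A((\chi-\eta)g)$, and differentiate in $m$ using Lemma~\ref{le:convergence}. The only cosmetic difference is that the paper splits the difference quotient into two terms and invokes norm convergence of $e_{\mu_m}(\chi-\eta)g$ (Lemma~\ref{le:weak convergence general}(ii)), whereas you expand into three terms and kill the extra one with $\mu_m(\bbT)\to0$; also, your explicit remark about passing to a subsequence so that the $\mu_n$ decrease is a point the paper uses tacitly (it is justified by the concrete construction in the proof of Lemma~\ref{le:convergence}, not by its statement).
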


\begin{proof}
Let  $\eta\in\bbT$ and   $\{\mu_n\}_{n\in\mathbb N}$ be as in Lemma~\ref{le:convergence}. By Lemmas \ref{eq:characterization} and \ref{le:approximation}, it suffices to show that $Q_A(Sf)=Q_A(f)$ for every $f\in\bigcup_{n\in\mathbb N}K_{e_\mu/e_{\mu_n}}$.  Fix $n\in \bbN$,  $f\in K^0_{e_\nu/e_{\mu_n}}$, and observe that then $(\chi-\eta)f\in K_{e_\nu/e_{\mu_n}}$. If  $m\ge n$, we also have $e_{\mu_m}|e_{\mu_n}$ and $(\chi-\eta)f\in K_{e_\nu/e_{\mu_m}}$.
 Lemma~\ref{le:basic} yields
 \[
 C_{e_\nu} (C_{e_\nu/{e_{\mu_m}}} ((\chi-\eta) f)= e_{\mu_m} (\chi-\eta) f.
 \]
The complex symmetry of $A$ and of its compression to $K_{e_\nu/e_{\mu_m}}$ shows that
 \[
 Q_A(e_{\mu_m} (\chi-\eta)f) = Q_A((\chi-\eta)f),\quad m\ge n
 \]
and therefore
\[
\begin{split}
0&= \frac{1}{\mu_m(\bbT)} \big( \<A (e_{\mu_m}(\chi-\eta)f) , e_{\mu_m}(\chi-\eta)f\>
- \<A ((\chi-\eta)f) , (\chi-\eta)f\>\big)\\
&=\left\< \frac{e_{\mu_m}-1}{\mu_m(\bbT)} (\chi-\eta)f   , A^*( e_{\mu_m}(\chi-\eta)f )\right\>\\
&\hskip3cm+\left\< A((\chi-\eta)f),  \frac{e_{\mu_m}-1}{\mu_m(\bbT)} (\chi -\eta)f    \right\>
\end{split}
\]
By Lemma~\ref{le:convergence}(ii) $(1/\mu_m(\mathbb{T}))({e_{\mu_m}-1}) (\chi-\eta)f $ tends weakly in $H^2$ to $(\chi+\eta)f$. On the other hand, $e_{\mu_m}(\chi-\eta)f$ tends pointwise on $\mathbb D$ to $(\chi-\eta)f$  and $\|e_{\mu_m}(\chi-\eta)f\|= \|(\chi-\eta)f\|$. By Lemma~\ref{le:weak convergence general}  $e_{\mu_m}(\chi-\eta)f$ tends to $(\chi-\eta)f$ in norm. We obtain then, by letting $m\to\infty$  the last equality,
\[
\< A((\chi+\eta)f), (\chi-\eta)f\>+\<A((\chi-\eta)f), (\chi+\eta)f\>=0.
\]
A simple calculation yields then
$
Q_A(Sf)=Q_A(f),
$ thereby concluding the proof.
\end{proof}

We observe that the argument above only requires that $A$ and its compressions to $K_{e_\nu/e_{\mu_n}}$, $n\in\mathbb N$, be complex symmetric.

\end{document}